\newcommand{\flt}{\mathrm{Flt}}
\newcommand{\R}{\mathbb{R}}
\newcommand{\Z}{\mathbb{Z}}
\newcommand{\lw}{\operatorname{lw}}
\newcommand{\conv}{\operatorname{conv}}
\newcommand{\zero}{\mathbf{0}}
\newcommand{\unitvec}{\mathbf{e}}
\newcommand{\one}{\mathbf{1}}
\newcommand{\shift}[2]{{\overrightarrow{#1}}^{#2}} %{\overrightarrow \left( #1 \right)}^#2}
\renewcommand{\@Opargbegintheorem}[4]{%
  #4\trivlist\item[\hskip\labelsep{#3#2\@thmcounterend}]}
\begin{document}

\title{Lattice-free simplices with lattice width $2d - o(d)$}
\author{Lukas Mayrhofer \and Jamico Schade \and Stefan Weltge}
\institute{Technical University of Munich, Germany\\
\email{\{lukas.mayrhofer,jamico.schade,weltge\}@tum.de}}

\maketitle

\begin{abstract}
The Flatness theorem states that the maximum lattice width $\flt(d)$ of a $d$-dimensional lattice-free convex set is finite.
It is the key ingredient for Lenstra's algorithm for integer programming in fixed dimension, and much work has been done to obtain bounds on $\flt(d)$.
While most results have been concerned with upper bounds, only few techniques are known to obtain lower bounds.
In fact, the previously best known lower bound $\flt(d) \ge 1.138d$ arises from direct sums of a $3$-dimensional lattice-free simplex.

In this work, we establish the lower bound $\flt(d) \ge 2d - O(\sqrt{d})$, attained by a family of lattice-free simplices.
Our construction is based on a differential equation that naturally appears in this context.

Additionally, we provide the first local maximizers of the lattice width of $4$- and $5$-dimensional lattice-free convex bodies.

\keywords{flatness theorem  \and lattice-free \and simplices}
\end{abstract}

\section{Introduction}
A convex body in $\R^d$ is called \emph{lattice-free} if it does not contain any integer points in its interior.
Lattice-free convex bodies appear in many important works concerning the theory of integer programming.
They are central objects in cutting plane theory~\cite{balas1971intersection,andersen2007inequalities,borozan2009minimal,del2016relaxations,basu2015geometric} and can been used as certificates of optimality in convex integer optimization~\cite{blair1984constructive,blair1985constructive,wolsey1981b,wolsey1981integer,moran2012strong,baes2016duality,basu2017optimality}.
Moreover they play a crucial role in Lenstra's algorithm~\cite{lenstra1983integer} for integer programming in fixed dimension.

A fundamental property of lattice-free convex bodies is that they are ``flat'' with respect to the integer lattice.
The \emph{lattice width} of a convex body $K \subseteq \R^d$ is
\[
    \lw(K) := \min \left \{ \max_{x \in K} \langle x,y \rangle - \min_{x \in K} \langle x,y \rangle : y \in \Z^d \setminus \{ \zero \} \right \},
\]
where $\langle \cdot,\cdot \rangle$ denotes the standard scalar product.
The famous \emph{Flatness theorem}, first proved by Khinchine~\cite{khinchine1948quantitative}, states that the maximum lattice width $\flt(d)$ of a $d$-dimensional lattice-free convex body is finite.
It has been applied in several results in mixed-integer programming (e.g., \cite{lenstra1983integer,averkov2018approximation,dash2014lattice,fukasawa2019not,cevallos2018lifting}) and much work has been done to obtain bounds on $\flt(d)$.
Combining the work of Banaszczyk, Litvak, Pajor, Szarek~\cite{banaszczyk1999flatness} and Rudelson~\cite{rudelson}, the currently best upper bound is $\flt(d) = O(d^{4/3} \operatorname{polylog}(d))$, and it is conjectured that $\flt(d) = \Theta(d)$ holds~\cite{banaszczyk1999flatness}.

While most results have been concerned with upper bounds, only few techniques are known to obtain lower bounds.
It is easy to see that $\flt(d) \ge d$ holds by observing that $\Delta := \conv \{ \zero, d \cdot \unitvec_1,\dots,d \cdot \unitvec_d\}$ is lattice-free and satisfies $\lw(\Delta) = d$.
This bound has been improved for small dimensions:
For $d=2$, Hurkens~\cite{hurkens1990blowing} proved that
\[
    \flt(2) = 1 + 2/\sqrt{3} \approx 1.08d
\]
holds, and this is already the last dimension for which $\flt(d)$ is known.
For $d=3$, Codenotti \& Santos~\cite{codenotti2020hollow} recently constructed a $3$-dimensional lattice-free simplex of lattice width $2 + \sqrt{2}$, showing that
\[
    \flt(3) \ge 2 + \sqrt{2} \approx 1.14d
\]
holds.
Averkov, Codenotti, Macchia \& Santos~\cite{averkov2021local} showed that this simplex is a local maximizer of the lattice width among $3$-dimensional lattice-free convex bodies.
In fact, Codenotti \& Santos conjectured that it is actually a global maximizer, i.e., that the above bound is tight.

It is possible to lift these examples to higher dimensions:
Using the notion of a direct sum, one can show that $\flt(d_1 + d_2) \ge \flt(d_1) + \flt(d_2)$ holds for all positive integers $d_1,d_2$, see~\cite[Prop.~1.4]{codenotti2020hollow}.
However, prior to this work, $d$-dimensional lattice-free convex bodies with lattice width strictly larger than $1.14d$ were not known.

In this work, we establish new lower bounds on $\flt(d)$.
In terms of small dimensions, we obtain the first local maximizers of the lattice width among $d$-dimensional lattice-free convex bodies for $d=4$ and $d=5$, implying
\[
    \flt(4) \ge 2 + 2 \sqrt{1 + 2/\sqrt{5}} \approx 1.19d
    \quad \text{and} \quad
    \flt(5) \ge 5 + \frac{2}{\sqrt{3}} \approx 1.23d.
\]
Unfortunately, it becomes inherently more difficult to obtain local maximizers of the lattice width in larger dimensions.
However, we show that for each $d$, it is still possible to construct lattice-free simplices with strictly increasing ratios of lattice width to dimension.
Our main result is the following:
\begin{theorem}
    \label{thmmain}
    There exist $d$-dimensional lattice-free simplices $(\Delta_d)_{d \geq 2}$ with lattice width $2d - O(\sqrt{d})$.
\end{theorem}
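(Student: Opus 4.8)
\emph{Strategy.}
The plan is to exhibit the $\Delta_d$ explicitly as a single parametrized family and to reduce everything to properties of one ``profile'' sequence. Writing $\Delta_d=\conv\{v_0,\dots,v_d\}$ and collecting the vertices as the rows of a $(d+1)\times d$ matrix $V$, observe that the width of $\Delta_d$ in a direction $y$ equals the \emph{range} $\max_i (Vy)_i-\min_i (Vy)_i$ of the vector $Vy\in\R^{d+1}$, so $\lw(\Delta_d)=\min_{y\in\Z^d\setminus\{\zero\}}\big(\max_i(Vy)_i-\min_i(Vy)_i\big)$; dually, $\Delta_d$ is lattice-free iff no $z\in\Z^d$ is a strict convex combination of the $v_i$. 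I would therefore let the $v_i$ depend on $i$ through a sequence $\psi(0),\dots,\psi(d)$ — for instance letting the coordinates of $v_i$ be prescribed shifts or partial sums of $\psi$, possibly after a global translation (which does not affect the width) — chosen so that $\Delta_d$ is ``as large as possible'' subject to lattice-freeness. Intuitively $\Delta_d$ is very elongated in every lattice direction (width $\approx 2d$) while staying thin in one non-lattice direction, which is how a simplex can be simultaneously lattice-free and of width close to $2d$.

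\emph{Where the differential equation comes from.}
Phrasing the construction as an optimization — maximize the lattice width of $\conv\{v_0,\dots,v_d\}$ over profiles $\psi$ subject to the $d+1$ facet inequalities being satisfied by all integer points only up to the boundary — its stationarity (Euler--Lagrange) condition becomes a second-order recurrence relating $\psi(k-1),\psi(k),\psi(k+1)$. After the natural rescaling (coordinates of size $\Theta(d)$, index $k=td$ with $t\in[0,1]$) this recurrence passes in the limit $d\to\infty$ to a second-order autonomous ODE of the form $\psi''=F(\psi,\psi')$ with an explicit right-hand side, the boundary data encoding that the extreme integer points sit on two prescribed facets. I would first solve this ODE in closed form (or closely enough), then \emph{define} $\psi$ on $\{0,\dots,d\}$ as a suitable rounding of the ODE solution, and only afterwards verify the two required properties directly for this concrete $\psi$.

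\emph{Lattice-freeness (the main obstacle).}
This is the heart of the argument. Given a candidate $z\in\Z^d$ in the interior of $\Delta_d$, I would exhibit one of the $d+1$ defining inequalities $\langle a_j,x\rangle\le\beta_j$ of $\Delta_d$ that $z$ violates; equivalently, one shows that the integer points already lying on $\partial\Delta_d$ ``block'' the interior. The ODE is precisely the condition that makes these blocking facets reach all the way out — a profile growing any faster than the ODE solution would let some integer point slip inside. Concretely this reduces to a family of inequalities among the numbers $\psi(k)$ (monotonicity and concavity of $\psi$, together with the exact identity forced by the recurrence), checked using properties inherited from the ODE. I expect this verification, rather than the width estimate, to demand the most care, since it must hold uniformly over all integer points and all $d$.

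\emph{Width and the error term.}
With $\psi$ fixed, the coordinate directions $y=\unitvec_k$ realize width $\psi(d)-\psi(0)$ up to the symmetric structure of the construction, and the ODE solution is scaled so that $\psi(d)-\psi(0)=2d-O(\sqrt d)$ — the deficit $O(\sqrt d)$ being exactly the price of rounding the continuous extremal profile to one compatible with $\Z^d$, governed by the curvature of $\psi$ near its endpoints. For a general $y\in\Z^d\setminus\{\zero\}$ one bounds $\max_i(Vy)_i-\min_i(Vy)_i$ from below by a telescoping estimate over the structure of $V$, using monotonicity of $\psi$ to rule out that any such $y$ beats the coordinate directions by more than a lower-order amount, concavity of $\psi$ making the coordinate directions extremal. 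Combining this with lattice-freeness yields $\lw(\Delta_d)\ge 2d-O(\sqrt d)$ for all $d\ge 2$, which is Theorem~\ref{thmmain}.
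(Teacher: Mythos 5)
Your proposal is an outline of intentions rather than a proof: each of the three substantive steps (choosing the profile, verifying lattice-freeness, bounding the width) is described as something you ``would'' do, and none is actually carried out. The general frame does match the paper's --- cyclically symmetric simplices in the hyperplane $\{x:\langle\one,x\rangle=1\}$ whose facet normals are cyclic shifts of one profile vector $a$, with the profile obtained as the large-$d$ limit of a stationarity condition, i.e.\ an ODE (this is exactly the paper's Section~\ref{secBackground}). But the content is missing. First, the ODE and its solution are never written down: in the paper the equation forces $y''=\gamma y'$, so the profile is an explicit exponential, $a_i=\delta^{i-1}-1$ with $\delta=(1-\sqrt{2/(d+1)})^{-1}$, and the $O(\sqrt d)$ deficit comes from optimizing the rate $\delta$ (via the identity $\lambda=1-\int_0^1 y$), not from ``the price of rounding the continuous profile''; also the profile is convex increasing, so your appeal to ``concavity of $\psi$'' points in the wrong direction. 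Second, and more importantly, your intuition about lattice-freeness is inverted: you expect it to be the delicate part, tuned to the ODE (``a profile growing any faster \dots would let some integer point slip inside''), whereas in the paper lattice-freeness holds for \emph{every} nondecreasing profile $a$ and is established by a short combinatorial argument --- for any integer $y$ with $\langle\one,y\rangle=1$ some cyclic shift has all proper partial sums $\le 0$ (Lemma~\ref{NicePermutationsExist}), and Abel summation against the nondecreasing $a$ then gives $\langle \shift{a}{i},y\rangle\ge a_{d+1}$ for some $i$. Nothing resembling this cyclic-shift lemma, or any substitute for it, appears in your proposal; ``exhibit one of the $d+1$ inequalities that $z$ violates'' is the statement of what must be proved, not an argument.

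Third, the width bound is not reduced to anything checkable. The claim that needs proof is a \emph{lower} bound uniform over all $c\in\Z^{d}\setminus\{\zero\}$; the paper gets it by normalizing $c$ by a cyclic shift so that $c_1$ is maximal, locating a descent $c_j>c_{j+1}$, using integrality to write $c_1\ge c_{j+1}+1$ and $c_1\ge c_{d+1}+1$, and pairing this with the explicit vertex $v$ (solved from the circulant system) and the monotonicity $v_1\ge v_2\ge v_{d+1}$ to conclude that the width in direction $c$ is at least $v_1-v_{d+1}$. Your ``telescoping estimate over the structure of $V$'' gestures at this but supplies neither the vertices nor the inequality chain, so the claimed bound $2d-O(\sqrt d)$ is unsupported. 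In short: the strategy is the right one, but as written there is no construction and no verification, so the proposal does not constitute a proof of Theorem~\ref{thmmain}.
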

In particular, we see that
\[
    \flt(d) \ge 2d - O(\sqrt{d})
\]
holds.

In contrast to the previously mentioned lower bounds, Theorem~\ref{thmmain} is not based on lifting small-dimensional examples to higher dimensions, but yields explicit constructions of lattice-free simplices for all dimensions.
An interesting aspect of our construction is that it arises by considering the problem from an infinite-dimensional point of view.
In fact, the facet-defining normal vectors of our simplices are discretizations of the solution to a differential equation that naturally appears within our approach (see Section~\ref{secBackground}).

Our paper is organized as follows.
In Section~\ref{secMain}, we describe the simplices mentioned in Theorem~\ref{thmmain}, prove that they are lattice-free, and determine their lattice widths.
The background on how our construction was obtained is provided in Section~\ref{secBackground}.
In Section~\ref{secLocalMax}, we present the local maximizers of the lattice width in dimensions $4$ and $5$.
We conclude with some open questions in Section~\ref{secQuestions} and comment on difficulties for obtaining local maximizers in dimensions $d \ge 6$.

\subsection*{Acknowledgements}
The authors would like to thank Gennadiy Averkov and Paco Santos for valuable feedback and discussions on earlier stages of this work, and Amitabh Basu for discussions about applications of the Flatness theorem within optimization.
This work was supported by the Deutsche Forschungsgemeinschaft (DFG, German Research Foundation), project number 451026932.

\section{Lattice-free simplices of large lattice width}
\label{secMain}

In this part, we will present $d$-dimensional lattice-free simplices with lattice width $2d - O(\sqrt{d})$.
Our construction yields highly symmetric simplices, at least when viewed as subsets of
\[
    H := \{ x \in \R^{d+1} : \langle \one,x \rangle = 1 \}.
\]
In fact, we will consider a simplex $\Delta \subseteq H$ and see that its projection $\pi(\Delta) \subseteq \R^d$ onto the first $d$ coordinates has the desired properties.
The simplex is given by
\[
    \Delta := \left \{ x \in H : \langle \shift{a}{i}, x \rangle \le a_{d+1} \text{ for } i = 0,1,\dots,d \right \},
\]
where $a \in \R^{d+1}$ is defined via
\[
    a_i := \delta^{i-1} - 1
\]
for every $i \in [d+1] := \{1,\dots,d+1\}$ with
\[
    \delta := \left( 1 - \sqrt{\frac{2}{d+1}} \right)^{-1},
\]
and $\shift{a}{i}$ arises from $a$ by cyclically shifting all entries $i$ positions to the right.
During this section, we will prove the following statements.

\begin{enumerate}[label=(\Alph*),leftmargin=3em]
    \item \label{propDimension} $\Delta$ is a $d$-dimensional simplex.
    \item \label{propLatticeFree} $\Delta$ does not contain an integer point in its relative interior.
    \item \label{propWidth} For every $c \in \Z^{d + 1} \setminus \{ \lambda \cdot \one : \lambda \in \R \}$ we have
        \begin{equation}
            \label{eqd3f48h}
            \max_{x \in \Delta} \langle c, x \rangle - \min_{x \in \Delta} \langle c, x \rangle \ge
            \frac{d \cdot \delta^{d+ 2} - \delta^{d + 1} - (d + 1) \cdot \delta^d + \delta + 1}{(\delta - 1) \cdot (\delta^{d + 1} - 1)}.
        \end{equation}
\end{enumerate}

Let us first show that these claims indeed imply our main result.
\begin{proof}[Proof of Theorem~\ref{thmmain}]
    Since $\Delta$ is a $d$-dimensional simplex by~\ref{propDimension}, the same holds for $\pi(\Delta)$.
    Suppose that $\pi(\Delta)$ is not lattice-free, i.e., there exists a point $x' \in \Z^d$ in the interior of $\pi(\Delta)$.
    The integer point $x = (x_1',\dots,x_d',1 - x'_1 - \dots - x'_d)$ is then contained in the relative interior of $\Delta$, a contradiction to~\ref{propLatticeFree}.

    To obtain a lower bound on the lattice width of $\pi(\Delta)$, let $\alpha$ denote the right-hand side of~\eqref{eqd3f48h}.
    Consider any $c' \in \Z^d \setminus \{ \zero \}$ and note that $c := (c'_1,\dots,c'_d,0)$ satisfies $c \in \Z^{d + 1} \setminus \{ \lambda \cdot \one : \lambda \in \R \}$.
    Thus, by~\ref{propWidth} we obtain
    \[
        \max_{x \in \pi(\Delta)} \langle c', x \rangle - \min_{x \in \pi(\Delta)} \langle c', x \rangle
        =
        \max_{x \in \Delta} \langle c, x \rangle - \min_{x \in \Delta} \langle c, x \rangle
        \ge \alpha.
    \]
    In particular, we see that $\lw(\pi(\Delta)) \ge \alpha$ holds.
    Note that
    \[
        \alpha > \frac{d \cdot \delta^{d+ 2} - \delta^{d + 1} - (d + 1) \cdot \delta^d}{(\delta - 1) \cdot \delta^{d + 1}} = \frac{d \cdot \delta^2 - \delta - (d + 1)}{(\delta - 1) \cdot \delta}
    \]
    holds, where the inequality follows from $\delta > 1$.
    Substituting $k = \sqrt{(d+1)/2}$, we get $d = 2 k^2 - 1$ and $\delta = \frac{k}{k - 1}$, and hence
    \begin{align*}
        \alpha & > \frac{(2k^2 - 1) \cdot \left(\frac{k}{k-1}\right)^2 - \frac{k}{k - 1} - 2k^2}{\frac{1}{k - 1} \cdot \frac{k}{k - 1}} \\
        &= 2(2k^2 - 1) - 4k + 3 = 2d - 4k + 3
        = 2d - \sqrt{8d+8} + 3. \quad \qed
    \end{align*}
\end{proof}

In the proofs of~\ref{propDimension} and~\ref{propLatticeFree} we will make use of the following auxiliary facts.

\begin{lemma}
\label{NicePermutationsExist}
For every $x \in \R^{d+1}$ with $\langle \one,x \rangle \ge 0$ there exists some $\ell$ such that $\shift{x}{\ell}_1 + \dots + \shift{x}{\ell}_j \le \langle \one,x \rangle$ holds for all $j \in [d]$.
Moreover, if $\langle \one,x \rangle > 0$, then each of these inequalities is strict.
\end{lemma}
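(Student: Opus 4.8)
The plan is to prove this by the classical ``cycle lemma'' idea: among the $d+1$ cyclic rotations of $x$, exhibit the one that starts immediately after a maximal prefix sum. Concretely, write $S := \langle \one, x \rangle \ge 0$, set $\pi_0 := 0$ and $\pi_t := x_1 + \dots + x_t$ for $t \in [d]$, so that $\pi_0, \pi_1, \dots, \pi_d$ are $d+1$ real numbers with $\pi_0 = 0$. Let $m \in \{0, 1, \dots, d\}$ be the \emph{smallest} index attaining $\max\{\pi_0, \pi_1, \dots, \pi_d\}$, and let $\ell$ be chosen so that $\shift{x}{\ell}$ equals the rotation $(x_{m+1}, x_{m+2}, \dots, x_{d+1}, x_1, \dots, x_m)$; this is legitimate since the rotations of $x$ are precisely the vectors $\shift{x}{0}, \dots, \shift{x}{d}$.

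The next step is to compute the partial sums of this particular rotation in terms of the $\pi_t$. For $j \in [d]$, the sum $\shift{x}{\ell}_1 + \dots + \shift{x}{\ell}_j$ equals $\pi_{m+j} - \pi_m$ when $m + j \le d+1$, and equals $(S - \pi_m) + \pi_{\,m+j-d-1}$ when $m + j > d+1$, in which case the index $m+j-d-1$ lies in $\{1, \dots, m-1\}$. In the first regime, either $m + j \le d$, so $\pi_{m+j} - \pi_m \le 0 \le S$ by maximality of $\pi_m$ and $S \ge 0$; or $m + j = d+1$, giving $S - \pi_m \le S$ since $\pi_m \ge \pi_0 = 0$. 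In the second regime, $(S - \pi_m) + \pi_{\,m+j-d-1} \le S$ is equivalent to $\pi_{\,m+j-d-1} \le \pi_m$, which again holds by maximality. This proves all the stated (non-strict) inequalities.

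For the ``moreover'' part, assume $S > 0$. In the first regime with $m + j \le d$ the value is $\le 0 < S$, so strictness is immediate. In the first regime with $m + j = d+1$, the index $j = d+1-m$ lies in $[d]$ only if $m \ge 1$, and then $\pi_m > \pi_0 = 0$ because $m$ is the \emph{smallest} maximizing index; hence $S - \pi_m < S$. In the second regime, $r := m+j-d-1$ satisfies $1 \le r \le m-1 < m$, so $\pi_r < \pi_m$ strictly (once more by minimality of $m$ among the maximizers), and therefore $(S - \pi_m) + \pi_r < S$. This handles all $j \in [d]$.

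The only genuine subtlety is the bookkeeping for the ``wrap-around'' prefixes of the chosen rotation, together with the choice of the \emph{smallest} maximizing index, which is exactly what upgrades the non-strict bounds to strict ones when $S > 0$. I expect the index arithmetic in identifying $\shift{x}{\ell}_1 + \dots + \shift{x}{\ell}_j$ with the correct combination of the $\pi_t$'s to be the most error-prone point, so I would write out the rotation explicitly and verify the two regimes $m+j \le d+1$ and $m+j > d+1$ separately; beyond that, the argument is a direct computation.
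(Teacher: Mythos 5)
Your proof is correct and is essentially the same argument as the paper's: rotate so that the sequence starts just after the first maximal prefix sum, then split into the non-wrap-around and wrap-around regimes. The only (immaterial) difference is that you take the maximum over $\pi_0=0,\pi_1,\dots,\pi_d$ while the paper maximizes over $S_1,\dots,S_{d+1}$, which can select a different but equally valid shift $\ell$.
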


\begin{proof}
For each $j \in [d+1]$ we define $S_j = x_1 + \dots + x_j$.
Note that $S_{d+1} = \langle \one,x \rangle$.
Let $k \in [d+1]$ denote the smallest index such that $S_k = \max \{S_1,\dots,S_{d+1}\}$ holds.
We will show that $\ell := d + 1 - k$ satisfies the claim.

Recall that $\shift{x}{\ell} = (x_{k+1}, x_{k+2}, \dots, x_{d+1}, x_1, \dots, x_k)$ and let $j \in [d]$.
If $j \le d + 1 - k$, then we have
\[
    \shift{x}{\ell}_1 + \dots + \shift{x}{\ell}_j = x_{k+1} + \dots + x_{k+j} = S_{k+j} - S_k \le 0 \le \langle \one,x \rangle,
\]
where the inequality $S_{k + j} - S_k \le 0$ holds due to the choice of $k$.
If $j \ge d + 2 - k$, then
\begin{align*}
    \shift{x}{\ell}_1 + \dots + \shift{x}{\ell}_j & = x_{k+1} + \dots + x_{d+1} + x_1 + \dots + x_{j + k - (d + 1)} \\
    & = S_{d+1} - S_k + S_{j + k - (d + 1)}.
\end{align*}
Since $j \le d$, we have $j + k - (d + 1) < k$ and hence by the choice of $k$ we see that $S_{j + k - (d + 1)} < S_k$ holds, which yields $\shift{x}{\ell}_1 + \dots + \shift{x}{\ell}_j < S_{d+1} = \langle \one,x \rangle$. \qed
\end{proof}

\begin{lemma}
\label{lemBounded}
    For every $x \in \R^{d+1} \setminus \{\zero\}$ with $\langle \one,x \rangle = 0$ there exists some $i$ such that $\langle \shift{a}{i},x \rangle > 0$.
\end{lemma}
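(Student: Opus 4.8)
The plan is to argue by contradiction. Assume that $\langle \shift{a}{i}, x\rangle \le 0$ for every $i \in \{0,1,\dots,d\}$; I will show that this forces $x = \zero$, contradicting $x \in \R^{d+1}\setminus\{\zero\}$.

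\emph{Reducing to equalities.} Summing all $d+1$ cyclic shifts of $a$ returns each entry of $a$ to each coordinate exactly once, so $\sum_{i=0}^{d}\shift{a}{i} = \bigl(\sum_{j=1}^{d+1} a_j\bigr)\one$. Since $\langle \one, x\rangle = 0$, taking the inner product with $x$ gives $\sum_{i=0}^{d}\langle\shift{a}{i}, x\rangle = 0$. As every summand is nonpositive by assumption, each of them must vanish, i.e.\ $\langle \shift{a}{i}, x\rangle = 0$ for all $i$.

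\emph{A telescoping identity.} The heart of the argument is a relation between consecutive shifts, exploiting that $a_{k+1} = \delta\, a_k + (\delta-1)$ for $1 \le k \le d$ while the wrap-around entry behaves differently. Reading the shift indices modulo $d+1$, one checks coordinate by coordinate that
\[
    \shift{a}{i} - \delta\cdot\shift{a}{i+1} \;=\; (1-\delta^{d+1})\,\unitvec_{i+1} \;+\; (\delta-1)\,\one
    \qquad (i = 0,1,\dots,d),
\]
where $\unitvec_1,\dots,\unitvec_{d+1}$ are the standard basis vectors of $\R^{d+1}$. This is nothing but the circulant structure of the system $\{\shift{a}{i}\}$, packaged in a way that avoids roots of unity. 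Verifying this identity — in particular, correctly handling the single coordinate at which the shift wraps around — is the one place where care is needed; everything else is immediate.

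\emph{Conclusion.} Take the inner product of the displayed identity with $x$. The left-hand side is $0$ by the first step, and $\langle\one,x\rangle = 0$ kills the last term on the right, leaving $0 = (1-\delta^{d+1})\,x_{i+1}$ for every $i \in \{0,\dots,d\}$. Since $\delta > 1$, we have $1 - \delta^{d+1} \neq 0$, so $x_{i+1} = 0$ for all $i$, i.e.\ $x = \zero$ — the desired contradiction. (Equivalently, this computation shows that the $(d+1)\times(d+1)$ matrix with rows $\shift{a}{0},\dots,\shift{a}{d}$ has trivial kernel inside the hyperplane $\{\langle\one,x\rangle = 0\}$, which is exactly what Lemma~\ref{lemBounded} asserts.) \qed
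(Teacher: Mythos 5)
Your proof is correct, and it takes a genuinely different route from the paper. I checked your key identity against the paper's convention $\shift{a}{i}_j = a_{j-i}$ (indices mod $d+1$): since $a_{k+1}=\delta a_k + (\delta-1)$ for $k\in[d]$ while $a_1 - \delta a_{d+1} = \delta - \delta^{d+1}$, one indeed gets $\shift{a}{i} - \delta\,\shift{a}{i+1} = (1-\delta^{d+1})\,\unitvec_{i+1} + (\delta-1)\,\one$, and your averaging step (all $d+1$ shifts sum to $(\sum_j a_j)\,\one$, so nonpositive inner products summing to zero must all vanish) is also sound. The paper instead argues via Lemma~\ref{NicePermutationsExist}: it picks a cyclic shift $y$ of $x$ whose partial sums $S_j$ are all nonpositive with at least one strictly negative, and then Abel summation together with the strict monotonicity $a_j < a_{j+1}$ yields $\langle a, y\rangle > 0$. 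The trade-off is this: the paper's argument uses only that $a$ is strictly increasing, so it applies verbatim to any such $a$ (and shares its machinery with the lattice-freeness proof, Lemma~\ref{lemLatticeFree1}); your argument exploits the specific exponential recurrence of $a$, but in exchange it is a clean linear-algebra statement — the rows $\shift{a}{0},\dots,\shift{a}{d}$ together with $\one$ span $\R^{d+1}$, i.e.\ the circulant system has trivial kernel on $\one^{\perp}$ — which even yields a slightly stronger conclusion (no nonzero $x\in\one^\perp$ has all $\langle\shift{a}{i},x\rangle\le 0$) without the combinatorial choice of a good cyclic shift. Both proofs are complete; just make sure, when you write out the coordinate-by-coordinate verification, that you fix the shift direction consistently with the paper's (the wrap-around coordinate lands at position $i+1$ under their convention).
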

\begin{proof}
    Let $\ell$ be as in Lemma~\ref{NicePermutationsExist}.
    Defining $y := \shift{x}{\ell}$ we have that $S_j := y_1 + \dots + y_j \le 0$ holds for all $j \in [d]$.
    Recall that $S_{d+1} := y_1 + \dots + y_{d+1} = 0$ and that $y \ne \zero$.
    Thus, we must have $S_{j^*} < 0$ for some $j^* \in [d]$.
    We obtain
    \begin{align*}
        \langle a,y \rangle
        & = a_1 y_1 + a_2 y_2 + a_3 y_3 + \dots + a_{d+1} y_{d+1} \\
        & = a_1 S_1 + a_2 (S_2 - S_1) + a_3 (S_3 - S_2) + \dots + a_{d+1} (S_{d+1} - S_d) \\
        & = S_1 (a_1 - a_2) + S_2 (a_2 - a_3) + \dots + S_d (a_d - a_{d+1}) + a_{d+1} S_{d+1} \\
        & \ge S_{j^*} (a_{j^*} - a_{j^* + 1}) + a_{d+1} S_{d+1} \\
        & = S_{j^*} (a_{j^*} - a_{j^* + 1}) \\
        & > 0,
    \end{align*}
    where we used the fact that $a_j < a_{j+1}$ holds for every $j \in [d]$.
    The claim follows since $\langle \shift{a}{d+1-\ell},x\rangle = \langle a,\shift{x}{\ell} \rangle = \langle a,y \rangle$. \qed
\end{proof}

\begin{lemma}
\label{lemLatticeFree1}
    For every $x \in H \cap \Z^{d+1}$ there exists some $i$ such that $\langle \shift{a}{i},x \rangle \ge a_{d+1}$. In particular, this implies that $\Delta$ does not contain an integer point in its relative interior.
\end{lemma}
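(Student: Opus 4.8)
The plan is to reduce the statement to the case $x \in H \cap \Z^{d+1}$ with $\langle \one, x\rangle = 1$ and to exploit the cyclic symmetry of the constraints together with Lemma~\ref{NicePermutationsExist}. First I would apply Lemma~\ref{NicePermutationsExist} to $x$: since $\langle \one,x\rangle = 1 > 0$, there is some $\ell$ such that, writing $y := \shift{x}{\ell}$ and $S_j := y_1 + \dots + y_j$, we have $S_j \le \langle \one,x\rangle = 1$ for all $j \in [d]$ (with strict inequality, by the ``moreover'' part). Because $y$ is an integer vector, the bound $S_j \le 1$ together with $S_j$ integral actually gives $S_j \le 0$ for all $j \in [d]$ — this is where integrality is used — while $S_{d+1} = 1$.

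Next I would run essentially the same Abel-summation computation as in the proof of Lemma~\ref{lemBounded}, but now tracking the constant term carefully. We have
\[
    \langle a, y\rangle = \sum_{j=1}^{d} S_j (a_j - a_{j+1}) + a_{d+1} S_{d+1} = \sum_{j=1}^{d} S_j (a_j - a_{j+1}) + a_{d+1}.
\]
Since $a_j < a_{j+1}$ for all $j \in [d]$, each coefficient $a_j - a_{j+1}$ is negative, and since each $S_j \le 0$, every summand $S_j(a_j - a_{j+1})$ is nonnegative. Hence $\langle a, y\rangle \ge a_{d+1}$. Translating back via $\langle a, y\rangle = \langle a, \shift{x}{\ell}\rangle = \langle \shift{a}{\,d+1-\ell}, x\rangle$, we obtain the desired index $i := d+1-\ell$ (read modulo $d+1$) with $\langle \shift{a}{i}, x\rangle \ge a_{d+1}$.

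For the ``in particular'' part: any integer point $x$ in the relative interior of $\Delta$ would lie in $H$, hence satisfy $\langle \shift{a}{i}, x\rangle < a_{d+1}$ for all $i = 0,1,\dots,d$ by definition of $\Delta$ and of relative interior, contradicting what we just proved. The main obstacle I anticipate is the bookkeeping around the cyclic shift — making sure the index $\ell$ produced by Lemma~\ref{NicePermutationsExist} is correctly converted into the shift applied to $a$ rather than to $x$, and that the edge cases $j \in \{d, d+1\}$ in the summation are handled without an off-by-one error. The genuinely new ingredient beyond Lemma~\ref{lemBounded} is the integrality rounding step $S_j \le 1 \Rightarrow S_j \le 0$, which is what prevents an integer point from sitting strictly inside all $d+1$ facet inequalities.
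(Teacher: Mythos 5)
Your proposal is correct and follows essentially the same route as the paper's proof: apply Lemma~\ref{NicePermutationsExist} to get partial sums $S_j < 1$, use integrality to sharpen this to $S_j \le 0$, and run the Abel summation from Lemma~\ref{lemBounded} to conclude $\langle a, y\rangle \ge a_{d+1}S_{d+1} = a_{d+1}$. The shift bookkeeping and the ``in particular'' step are handled exactly as in the paper.
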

\begin{proof}
    We proceed similarly to the proof of Lemma~\ref{lemBounded}.
    We pick $\ell$ as in Lemma~\ref{NicePermutationsExist} and define $y := \shift{x}{\ell}$.
    Note that $S_j := y_1 + \dots + y_j < 1$ holds for all $j \in [d]$.
    Since $y$ is integer, we even have $S_j \le 0$ for all $j \in [d]$.
    Setting $S_{d+1} := y_1 + \dots + y_{d+1} = 1$ we hence obtain
    \begin{align*}
        \langle a,y \rangle
        & = a_1 y_1 + a_2 y_2 + a_3 y_3 + \dots + a_{d+1} y_{d+1} \\
        & = a_1 S_1 + a_2 (S_2 - S_1) + a_3 (S_3 - S_2) + \dots + a_{d+1} (S_{d+1} - S_d) \\
        & = S_1 (a_1 - a_2) + S_2 (a_2 - a_3) + \dots + S_d (a_d - a_{d+1}) + a_{d+1} S_{d+1} \\
        & \ge a_{d+1} S_{d+1} = a_{d+1},
    \end{align*}
    where the inequality holds since we have $a_j \le a_{j+1}$ for every $j \in [d]$ and $S_{d+1} = 1$. Again, the claim follows since $\langle \shift{a}{d+1-\ell},x\rangle = \langle a,\shift{x}{\ell} \rangle = \langle a,y \rangle$. \qed
\end{proof}

\begin{lemma}
    $\Delta$ is a $d$-dimensional simplex.
\end{lemma}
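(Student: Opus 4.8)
The plan is to verify three things: that $\Delta$ is nonempty and full-dimensional inside the $d$-dimensional affine hyperplane $H$, that $\Delta$ is bounded, and that $\Delta$ has at most $d+1$ facets. Since a bounded, full-dimensional $d$-polytope has at least $d+1$ facets, and a $d$-polytope with exactly $d+1$ facets is a simplex, these three facts together give the claim.

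For full-dimensionality I would exhibit a point in the relative interior of $\Delta$. The natural candidate, in view of the cyclic symmetry of the construction, is the barycenter $x^\star := \tfrac{1}{d+1}\one$. It clearly lies in $H$, and since the sum of the entries of a vector is invariant under cyclic shifts, $\langle \shift{a}{i}, x^\star \rangle = \tfrac{1}{d+1}\langle \one, a \rangle = \tfrac{1}{d+1}\sum_{j=0}^{d}\delta^j - 1$ for every $i = 0,1,\dots,d$. Because $\delta > 1$, the arithmetic mean $\tfrac{1}{d+1}\sum_{j=0}^{d}\delta^j$ is strictly less than the largest term $\delta^d$, so $\langle \shift{a}{i}, x^\star \rangle < \delta^d - 1 = a_{d+1}$ for all $i$. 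Hence every facet inequality is strict at $x^\star$, so a sufficiently small ball around $x^\star$ inside $H$ is contained in $\Delta$; in particular $\Delta$ is nonempty and $\dim \Delta = \dim H = d$.

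For boundedness I would compute the recession cone of $\Delta$ inside $H$, namely $\{ x \in \R^{d+1} : \langle \one, x \rangle = 0 \text{ and } \langle \shift{a}{i}, x \rangle \le 0 \text{ for } i = 0,\dots,d \}$. By Lemma~\ref{lemBounded}, every nonzero $x$ with $\langle \one, x \rangle = 0$ satisfies $\langle \shift{a}{i}, x \rangle > 0$ for some $i$, so this cone is $\{\zero\}$ and $\Delta$ is bounded. Together with the previous paragraph, $\Delta$ is a $d$-dimensional polytope contained in $H$.

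Finally, $\Delta$ is cut out of the $d$-dimensional affine space $H$ by exactly $d+1$ linear inequalities, so it has at most $d+1$ facets; combined with the fact that a bounded $d$-dimensional polytope has at least $d+1$ facets, it has exactly $d+1$ facets and is therefore a $d$-dimensional simplex. There is no genuine obstacle here; the only things to keep track of are that ``dimension $d$'' refers to full-dimensionality inside the hyperplane $H$ rather than inside $\R^{d+1}$, and that the recession-cone computation must incorporate the linear constraint $\langle \one, x \rangle = 0$ defining $H$ --- both handled by Lemmas~\ref{NicePermutationsExist} and~\ref{lemBounded}. (Alternatively, one could pin down the $d+1$ vertices explicitly as the cyclic shifts of a single point, exploiting the order-$(d+1)$ symmetry $x \mapsto \shift{x}{1}$ of $\Delta$, but the facet-counting argument avoids that computation.)
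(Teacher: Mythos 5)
Your proposal is correct and follows essentially the same route as the paper: the barycenter $\tfrac{1}{d+1}\one$ certifies full-dimensionality within $H$, Lemma~\ref{lemBounded} shows the recession cone is trivial, and the count of $d+1$ defining inequalities then forces $\Delta$ to be a simplex. Your write-up merely makes the mean-versus-maximum estimate and the facet-counting step slightly more explicit than the paper does.
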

\begin{proof}
    The point $p := \frac{1}{d+1} \one$ is contained in $H$ and satisfies
    \[
        \langle \shift{a}{i},p \rangle = \frac{a_1 + \dots + a_{d+1}}{d+1} < a_{d+1}
    \]
    for all $i$.
    In particular, we see that $\Delta$ contains a ball within $H$ around $p$, and hence $\Delta$ is $d$-dimensional.

    Since $\Delta$ is defined by $d+1$ linear inequalities, it suffices to show that $\Delta$ is bounded in order to prove that it is a simplex.
    The recession cone of $\Delta$ is given by
    \[
         \text{recc}(\Delta) = \left \{ x \in \R^{d + 1}: \langle \one, x \rangle = 0, \langle \shift{a}{i}, x \rangle \le 0 \text{ for } i = 0,1,\dots,d \right \}.
    \]
    From Lemma~\ref{lemBounded}, it follows directly that $\text{recc}(\Delta) = \{ \zero \}$, proving that $\Delta$ is indeed bounded. \qed
\end{proof}

For the proof of~\ref{propWidth}, let us first determine the vertices of $\Delta$.
To this end, define the vector $v \in \R^{d+1}$ via
\begin{align*}
    v_1 & = \frac{1 - d \cdot \delta^d + (d - 2) \cdot \delta^{d + 1} + \delta^{d + 2}}{(\delta - 1) \cdot (\delta^{d + 1} - 1)}, \\
    v_2 = \dots = v_d & = \frac{(\delta - 1) \cdot \delta^d}{\delta^{d + 1} - 1}, \\
    v_{d + 1} & = \frac{- \delta + \delta^d + (d - 1) \cdot \delta^{d + 1} - (d - 1) \cdot \delta^{d + 2}}{(\delta - 1) \cdot (\delta^{d + 1} - 1)}. 
\end{align*}

\begin{lemma}
    $\Delta = \conv \left( \left\{\shift{v}{0}, \shift{v}{1}, \dots, \shift{v}{d+1} \right\} \right)$.
\end{lemma}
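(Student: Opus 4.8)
The goal is to show that the cyclic shifts $\shift{v}{0}, \dots, \shift{v}{d}$ of $v$ (note that $\shift{v}{d+1} = \shift{v}{0}$) are exactly the vertices of $\Delta$. Since we already know that $\Delta$ is a $d$-dimensional simplex, it has precisely $d+1$ vertices, so it will be enough to prove that $v$ is a vertex of $\Delta$ and that its cyclic shifts are pairwise distinct; the latter is immediate from the explicit formulas, since at least one of the entries $v_1, v_{d+1}$ differs from the common value $v_2 = \dots = v_d$, so that $v$ is not fixed by any nontrivial cyclic shift.

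To pass from ``$v$ is a vertex'' to ``all shifts of $v$ are vertices,'' I would use the symmetry $\sigma \colon \R^{d+1} \to \R^{d+1}$, $x \mapsto \shift{x}{1}$. It fixes the hyperplane $H$, and since $\langle \shift{a}{i}, \shift{x}{1} \rangle = \langle \shift{a}{i-1}, x \rangle$, it permutes the $d+1$ defining inequalities of $\Delta$ cyclically; hence $\sigma(\Delta) = \Delta$, and $\sigma$ sends vertices to vertices. So if $v$ is a vertex, then so is each $\shift{v}{i} = \sigma^i(v)$, and these being $d+1$ distinct vertices of a $d$-simplex, they must be all of them, giving $\Delta = \conv\{\shift{v}{0}, \dots, \shift{v}{d+1}\}$.

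It remains to check that $v$ is a vertex of $\Delta$. Here I would simply verify the identities $\langle \one, v \rangle = 1$ and $\langle \shift{a}{i}, v \rangle = a_{d+1}$ for $i = 1, \dots, d$, together with the strict inequality $\langle a, v \rangle < a_{d+1}$ (recall $a = \shift{a}{0}$). The first of these places $v$ in $H$; combined with the last it shows $v \in \Delta$; and since $v$ then lies on the $d$ facet hyperplanes $\{x \in H : \langle \shift{a}{i}, x \rangle = a_{d+1}\}$ for $i = 1, \dots, d$, whose intersection inside the $d$-simplex $\Delta$ is a single vertex, $v$ is forced to be the vertex of $\Delta$ opposite the facet with normal $a$. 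All of these are routine computations: using $a_j = \delta^{j-1} - 1$ and the fact that $v$ has only the two exceptional entries $v_1, v_{d+1}$ next to the bulk value $v_2 = \dots = v_d = (\delta - 1)\delta^d/(\delta^{d+1} - 1)$, each inner product $\langle \shift{a}{i}, v \rangle$ reduces to a short combination of partial geometric sums $1 + \delta + \dots + \delta^d$, which cancel against the denominator $\delta^{d+1} - 1 = (\delta - 1)(1 + \delta + \dots + \delta^d)$. The only mildly delicate part, and the step I expect to require care, is the bookkeeping of the cyclic shift: one has to track, as a function of $i$, whether the single zero entry $a_1 = 0$ and the exceptional coordinates $v_1, v_{d+1}$ get paired, which leads to a small case distinction on $i$; everything after that is mechanical simplification.
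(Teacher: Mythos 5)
Your proposal is correct and follows essentially the same route as the paper: both reduce by the cyclic symmetry of $\Delta$ to checking that $v$ itself is a vertex, and both do so by verifying $\langle \one, v\rangle = 1$, the $d$ equalities $\langle \shift{a}{i}, v\rangle = a_{d+1}$ for $i \in [d]$, and the strict inequality $\langle a, v\rangle < a_{d+1}$. The only difference is that the paper carries out these geometric-series computations explicitly (in particular establishing the sign of $\langle a, v\rangle$ via a factorization), whereas you defer them as routine; the checks you identify are exactly the ones performed there.
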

\begin{proof}
    For every $i$, we will show that $\shift{v}{i}$ is contained in $H$, satisfies all linear inequalities defining $\Delta$, and all but one even with equality.
    This shows that each $\shift{v}{i}$ is a vertex of $\Delta$.
    Since $\Delta$ is a simplex and all $\shift{v}{i}$ are distinct, we obtain the claim.
    To this end, due to the circulant structure of $\Delta$, we may assume that $\shift{v}{i} = v$ holds.
    First, observe that 
    \begin{align*}
        (\delta  - 1) \cdot (\delta^{d + 1} - 1) \cdot \langle \one, v \rangle
        & = (\delta  - 1) \cdot (\delta^{d + 1} - 1) \cdot (v_1 + (d - 1) \cdot v_2 + v_{d + 1}) \\
        & = \delta^{d + 2} - \delta^{d + 1} - \delta + 1
        = (\delta - 1) \cdot (\delta^{d + 1} - 1)
    \end{align*}
    holds, which shows that $v$ is contained in $H$.
    Next, note that we have
    \[
        \langle \overrightarrow{a}^0, v \rangle = v_2 \cdot \sum_{k = 1}^{d + 1} a_i + (v_1 - v_2) \cdot a_1 + (v_{d + 1} - v_2) \cdot a_{d+1}
    \]
    and
    \[
        \langle \overrightarrow{a}^i, v \rangle = v_2 \cdot \sum_{k = 1}^{d + 1} a_i + (v_1 - v_2) \cdot a_{d + 2 - i} + (v_{d + 1} - v_2) \cdot a_{d+1-i}
    \]
    for $i \in [d]$.
    To evaluate these expressions, observe that we have
    \begin{align*}
        (\delta - 1) \cdot (\delta^{d + 1} - 1) \cdot v_2 \cdot \sum_{k = 1}^{d + 1} a_i &= (\delta - 1)^2 \cdot \delta^d \cdot \sum_{k = 0}^d (\delta^k - 1) \\
        &= (\delta-1)^2 \cdot \delta^d \cdot \left( \frac{\delta^{d + 1} - 1}{\delta - 1} - (d + 1) \right) \\
        &=\delta^d \cdot (\delta - 1) \cdot \left((\delta^{d + 1} - 1) - (d + 1) \cdot (\delta - 1)\right),
    \end{align*}
    as well as 
    \[
        (\delta - 1) \cdot (\delta^{d + 1} - 1) \cdot (v_1 - v_2)
        = 1 - (d + 1) \cdot \delta^d + d \cdot \delta^{d + 1}
    \]
    and 
    \[
        (\delta - 1) \cdot (\delta^{d + 1} - 1) \cdot (v_{d + 1} - v_2)
        = - \delta + (d + 1) \cdot \delta^{d + 1} - d \cdot \delta^{d +2}.
    \]
    Thus, for $i=0$ we obtain
    \begin{align*}
        \underbrace{(\delta - 1) \cdot (\delta^{d + 1} - 1)}_{>0} \cdot \langle \overrightarrow{a}^0, v \rangle  
        &= (1 - \delta^{d + 1}) \cdot \delta \cdot \left( (d - 1) \cdot \delta^d - d \cdot \delta^{d - 1} + 1 \right) \\
        &= (1 - \delta^{d + 1}) \cdot \delta \cdot (\delta - 1) \cdot \left((d - 1) \cdot \delta^{d - 1} - \sum_{k = 0}^{d - 2} \delta^k \right) \\
        &= \underbrace{(1 - \delta^{d + 1})}_{<0} \cdot \underbrace{\delta \cdot (\delta - 1) \cdot \sum_{k = 0}^{d - 2} (\delta^{d - 1} - \delta^k)}_{> 0},
    \end{align*}
    which implies $\langle \overrightarrow{a}^0, v \rangle < 0 < a_{d + 1}$.
    For $i \in [d]$ we see that
    \begin{align*}
        (\delta - 1) \cdot (\delta^{d + 1} - 1) \cdot \langle \overrightarrow{a}^i, v \rangle
        &= (\delta^d - 1) \cdot (\delta - 1) \cdot (\delta^{d + 1} - 1) \\
        &= (\delta - 1) \cdot (\delta^{d + 1} - 1) \cdot a_{d + 1}
    \end{align*}
holds, and hence $\langle \overrightarrow{a}^i, v \rangle = a_{d + 1}$.
\qed
\end{proof}

Note that, in order to prove~\ref{propWidth}, it remains to show the following:

\begin{lemma}
\label{lemLatticeWidth}
    For every $c \in \Z^{d + 1} \setminus \{ \lambda \cdot \one : \lambda \in \R \}$ we have
    \begin{equation}
        \label{eqhude8j}
        \max_{x \in \Delta} \langle c, x \rangle - \min_{x \in \Delta} \langle c, x \rangle \ge v_1 - v_{d+1}.
    \end{equation}
\end{lemma}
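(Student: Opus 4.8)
The plan is to reduce the statement to a short combinatorial inequality by exploiting the near-diagonal shape of the vertex $v$. By the previous lemma, $\Delta$ is the convex hull of the vertices $\shift{v}{0},\shift{v}{1},\dots,\shift{v}{d}$, so a linear functional attains its minimum and maximum over $\Delta$ at these vertices; hence the left-hand side of~\eqref{eqhude8j} equals $\max_i \langle c,\shift{v}{i}\rangle - \min_i \langle c,\shift{v}{i}\rangle$, where $i$ runs over $\{0,1,\dots,d\}$. Since $v_2=\dots=v_d$, I would write $v = v_2\one + (v_1-v_2)\,\unitvec_1 + (v_{d+1}-v_2)\,\unitvec_{d+1}$, so that, reading all indices modulo $d+1$ (so $\unitvec_0=\unitvec_{d+1}$), $\shift{v}{i} = v_2\one + (v_1-v_2)\,\unitvec_{i+1} + (v_{d+1}-v_2)\,\unitvec_i$. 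Therefore $\langle c,\shift{v}{i}\rangle = v_2\langle\one,c\rangle + p\,c_{i+1} + q\,c_i$ with $p:=v_1-v_2$ and $q:=v_{d+1}-v_2$, and the term $v_2\langle\one,c\rangle$ does not depend on $i$ and cancels in the difference. Thus it remains to show that the function $g(i):=p\,c_{i+1}+q\,c_i$ satisfies $\max_i g(i) - \min_i g(i) \ge p-q$, because $p-q = v_1-v_{d+1}$.

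The next step is to pin down the signs of $p$ and $q$. The identities already computed in the proof of the previous lemma give $(\delta-1)(\delta^{d+1}-1)\,p = 1-(d+1)\delta^d+d\,\delta^{d+1}$ and $(\delta-1)(\delta^{d+1}-1)\,q = -\delta\bigl(1-(d+1)\delta^d+d\,\delta^{d+1}\bigr)$. Rewriting the common factor as $1-(d+1)\delta^d+d\,\delta^{d+1} = (\delta-1)\sum_{k=0}^{d-1}(\delta^d-\delta^k)$ and using $\delta>1$, both this factor and $(\delta-1)(\delta^{d+1}-1)$ are positive, so $p>0$ and $q<0$ (indeed $q=-\delta p$).

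For the combinatorial core, note that since $c\in\Z^{d+1}$ is not a scalar multiple of $\one$, it is non-constant; put $M:=\max_k c_k$. The coordinates equal to $M$ form a nonempty proper subset of the cyclic index set, so there is an index $r$ with $c_r=M$ and $c_{r-1}\le M-1$, and an index $s$ with $c_s=M$ and $c_{s+1}\le M-1$ (otherwise, walking around the cycle, every coordinate would equal $M$). Using $q<0$ for the first estimate and $p>0$ for the second,
\[
 g(r-1) = pM + q\,c_{r-1} \ge pM + q(M-1), \qquad g(s) = p\,c_{s+1} + qM \le p(M-1) + qM,
\]
whence $g(r-1)-g(s) \ge \bigl(pM+q(M-1)\bigr) - \bigl(p(M-1)+qM\bigr) = p-q$. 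Consequently $\max_i g(i) - \min_i g(i) \ge p-q = v_1-v_{d+1}$, which is exactly~\eqref{eqhude8j}. I expect no serious obstacle here; the only points demanding care are the bookkeeping of the cyclic indices (identifying $\unitvec_0$ with $\unitvec_{d+1}$ and likewise for the coordinates of $c$) and the elementary positivity argument for $p$, both of which are routine.
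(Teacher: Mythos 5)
Your proof is correct and is essentially the paper's argument in a slightly different dress: the paper first cyclically normalizes $c$ so that $c_1$ is a maximal entry with $c_1>c_{d+1}$ and then compares $\langle c,v\rangle$ with $\langle c,\shift{v}{j}\rangle$ at the first descent $j$, which is exactly your comparison of $g(r-1)$ with $g(s)$ after writing $v=v_2\one+p\,\unitvec_1+q\,\unitvec_{d+1}$. Both proofs rest on the same three ingredients — $v_1\ge v_2$, $v_2\ge v_{d+1}$, and the integrality of $c$ forcing gaps of at least $1$ at the boundary of the block of maximal entries — so this is the same proof.
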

\begin{proof}
    Since not all entries of $c$ are equal, there exists some $i$ such that $\shift{c}{i}_1 \ge \shift{c}{i}_j$ for all $j \in [d]$ and $\shift{c}{i}_1 > \shift{c}{i}_{d+1}$.
    Due to the circulant symmetry of $\Delta$, we may replace $c$ by $\shift{c}{i}$ without changing the left-hand side in~\eqref{eqhude8j}.
    Thus, we may assume that $c = \shift{c}{i}$ holds.
    Let $j \in [d]$ denote the smallest index such that $c_j > c_{j + 1}$ holds. Since $c_1$ is a maximal entry of $c$, it is clear that $c_j = c_1$ holds.
    Moreover, since $c$ is an integer vector, we see that
    \begin{equation}
        \label{eqhusd9k}
        c_1 = c_j \geq c_{j + 1} + 1 \text{ and } c_j = c_1 \geq c_{d + 1} + 1
    \end{equation}
    hold.
    We obtain
    \begin{align*}
        \max_{x \in \Delta} \langle c, x \rangle & - \min_{x \in \Delta} \langle c, x \rangle \\
        & \ge \langle c, v \rangle - \langle c, \shift{v}{j} \rangle
        = \langle c, v - \shift{v}{j} \rangle \\
        & = c_1 (v_1 - v_2) + c_j (v_2 - v_{d+1}) + c_{j+1} (v_2 - v_1) + c_{d+1} (v_{d+1} - v_2) \\
        & = (c_1 - c_{j+1}) (v_1 - v_2) + (c_j - c_{d+1}) (v_2 - v_{d+1}).
    \end{align*}
    We claim that $v_1 \ge v_2$ and $v_2 \ge v_{d+1}$, which, together with~\eqref{eqhusd9k}, implies
    \begin{align*}
        \max_{x \in \Delta} \langle c, x \rangle - \min_{x \in \Delta} \langle c, x \rangle
        & \ge (c_1 - c_{j+1}) (v_1 - v_2) + (c_j - c_{d+1}) (v_2 - v_{d+1}) \\
        & \ge (v_1 - v_2) + (v_2 - v_{d+1})
        = v_1 - v_{d+1},
    \end{align*}
    in which case we are done.
    To see that $v_1 \ge v_2$ and $v_2 \ge v_{d+1}$ hold, we use
    \begin{align*}
        \underbrace{(\delta - 1) \cdot (\delta^{d + 1} - 1)}_{> 0} \cdot (v_1 - v_2) &= 1 - d \cdot \delta^d + (d - 2) \cdot \delta^{d + 1} + \delta^{d + 2} - (\delta - 1)^2 \cdot \delta^d \\
        &= \underbrace{(\delta - 1)}_{> 0} \cdot \underbrace{\sum_{k = 0}^{d - 1} (\delta^d - \delta^k)}_{>0}
    \end{align*}
    and $v_2 - v_{d + 1} = \delta(v_1 - v_2)$. \qed
\end{proof}

\section{An infinite-dimensional view}
\label{secBackground}

In this section, we would like to provide some background on the construction of the simplices in the previous section.
It is inspired by the lattice-free simplices in~\cite{hurkens1990blowing} and~\cite{codenotti2020hollow}, which attain the largest (known) lattice widths in dimensions $2$ and $3$, respectively.
In fact, they can be also described in the form
\begin{equation}
    \label{eqForm}
    \left \{ x \in H : \langle \shift{a}{i}, x \rangle \le a_{d+1} \text{ for } i = 0,1,\dots,d \right \},
\end{equation}
for some vector $a \in \R^{d+1}$.

Such sets were also used by Doolittle, Katthän, Nill, Santos~\cite{doolittle2021empty} in the context of simplices whose integer points coincide with their vertices.
Within this setting, constructions of Sebő~\cite{sebHo1999introduction} were already based on highly-symmetric simplices.
Herr, Rehn, Schürmann~\cite{herr2015lattice} provide some more background on how symmetry interacts with lattice-freeness.

A convenient property of a set as in~\eqref{eqForm} is that it is lattice-free whenever the entries of $a$ are non-decreasing, see the proof of Lemma~\ref{lemLatticeFree1}.
Since we are limited to the hyperplane $H$, we may assume that $a_1 = 0$ and $a_{d+1} = 1$.
Every vertex of the above set is the cyclic shift of some vector $v \in \R^{d+1}$ satisfying
\begin{equation}
    \label{eqhusdok}
    \begin{bmatrix}
        1       & 1      & \cdots  & 1 & 1      \\
        a_2 & a_3    & \cdots & a_{d+1} & a_1    \\
        a_3 & a_4    & \cdots  & a_1 & a_2   \\
        \vdots  & \vdots &   & \vdots & \vdots \\
        a_{d+1}     & a_1    & \cdots & a_{d-1} & a_d
    \end{bmatrix}
    v = \one.
\end{equation}
In the examples from~\cite{hurkens1990blowing,codenotti2020hollow}, the lattice width is equal to $v_1 - v_{d+1}$, which is why we particularly focus on $\lambda = \frac{v_1 - v_{d+1}}{2d}$ in what follows.
(Note that we obtain simplices with $\lambda$ close to $1$.)
Let $C$ arise from the above matrix by deleting the first row as well as the first and last columns, and set $w = (v_2,\dots,v_d)$.
The above system is equivalent to
\[
    v_1 \begin{bmatrix}
        a_2 \\
        a_3 \\
        \vdots \\
        a_{d+1} \\
    \end{bmatrix}
    +
    C w
    +
    v_{d+1}
    \begin{bmatrix}
        a_1 \\
        a_2 \\
        \vdots \\
        a_d
    \end{bmatrix}
    =
    \one,
    \quad
    v_1 + \langle \one,w \rangle + v_{d+1} = 1.
\]
Substituting $v_1 = \frac{1}{2} (1 - \langle \one,w \rangle) + d\lambda$ and $v_{d+1} = \frac{1}{2} (1 - \langle \one,w \rangle) - d\lambda$, this leads to
\[
    \lambda \cdot d
    \left(
    \left[
    \begin{smallmatrix} a_2 \\ a_3 \\ \vdots \\ a_{d+1} \\ \end{smallmatrix}
    \right]
    -
    \left[
    \begin{smallmatrix} a_1 \\ a_2 \\ \vdots \\ a_d \end{smallmatrix}
    \right]
    \right)
    =
    \one - Cw
    -
    \left( 1 - \langle \one,w \rangle \right)
    \frac{1}{2}
    \left(
    \left[
    \begin{smallmatrix} a_1 \\ a_2 \\ \vdots \\ a_d \end{smallmatrix}
    \right]
    +
    \left[
    \begin{smallmatrix} a_2 \\ a_3 \\ \vdots \\ a_{d+1} \\ \end{smallmatrix}
    \right]
    \right).
\]
In order to understand which vectors $a$ lead to a large value $\lambda$, it is convenient to think of $a$ as (the discretization of) a function $y \colon [0,1] \to \R$, $w$ as a function $\omega \colon [0,1] \to \R$, and regard $d$ to be large.
We consider the continuous analogue
\begin{equation}
    \label{eqlukas}
    \lambda \cdot y'
    =
    1 - \operatorname{C} \omega
    -
    \left( 1 - \int_0^1 \omega(s) \, \mathrm{d}s \right)
    \cdot
    y.
\end{equation}
The matrix $C$ is replaced by the convolution operator $\operatorname{C}$ given by
\[
    (\operatorname{C} \omega)(t) := \int_0^1 y(t+s)\,\omega(s) \, \mathrm{d} s,
\]
where we set $y(t) = y(t-1)$ whenever $1 < t \leq 2$.
The boundary conditions $a_1 = 0$ and $a_{d+1} = 1$ are represented by $y(0) = 0$ and $y(1) = 1$.
Applying $\int_0^1 \cdot \, \mathrm{d} t$ to both sides of \eqref{eqlukas} yields
\[
    \lambda = 1 - \int_0^1 y(t) \, \mathrm{d} t \leq 1,
\]
which suggests that the general approach will not yield simplices of lattice width larger than $2d$.
Luckily, choosing $\omega$ to be a constant function already yields solutions close to that bound:
If $\omega$ is constant, taking the derivative of both sides in~\eqref{eqlukas}, we see that $y'' = \gamma \cdot y'$ holds for some $\gamma \in \R$, and hence $y$ has to be an exponential function.
In the previous section we have seen that, choosing the vector $a$ to be a discretization of an exponential function, we indeed obtain simplices of the desired lattice widths.

\section{Local maximizers in dimensions $4$ and $5$}
\label{secLocalMax}

Let us mention that it is possible to construct lattice-free simplices with slightly larger lattice widths than the simplices presented in the Section~\ref{secMain}.
In fact, by optimizing the choice of $a_1, \dots, a_{d + 1}$ to maximize $v_1 - v_{d + 1}$ in the construction from Section~\ref{secBackground} in dimensions $4$ and $5$, we obtain the following simplices.
For dimension $d=4$, we define the simplex $\Delta_4 := \conv \{\shift{v}{1},\dots,\shift{v}{5}\}$, where $v \in \R^5$ with
\begin{align*}
v_1 &= \frac{1}{5} \left( 7 - 2 \sqrt{5} + 2 \sqrt{10 + 2\sqrt{5}} \right) & v_4 &= v_2 \\
v_2 &= \frac{1}{5} \left( -3 + 4\sqrt{5} - 4 \sqrt{5 - 2\sqrt{5}} \right) & v_5 &= \frac{1}{5} \left( -3 - 2\sqrt{5} - 2\sqrt{5 + 2\sqrt{5}} \right). \\
v_3 &= \frac{1}{5} \left( 7 - 4\sqrt{5} + 6 \sqrt{5 - 2\sqrt{5}} \right) \\
\end{align*}
For dimension $d=5$, we define $\Delta_5 := \conv \{\shift{v}{1},\dots,\shift{v}{6}\}$, where $v \in \R^6$ with
\begin{align*}
v_1 &= \frac{1}{18} \left(57 - 7\sqrt{3} \right)   & v_4 &= v_3                                         \\
v_2 &= \frac{1}{3} \left(4 \sqrt{3} - 5 \right)    & v_5 &= v_2                                         \\
v_3 &= \frac{1}{18} \left(27 - 11 \sqrt{3} \right) & v_6 &= \frac{1}{18} \left( -33 -19 \sqrt{3} \right).   
\end{align*}
We can prove the following.
\begin{theorem}
Let $\pi_d: \R^{d + 1} \to \R^d$ denote the projection onto the first $d$ coordinates. Then, $\pi_4(\Delta_4)$ and $\pi_5(\Delta_5)$ are lattice-free simplices with
\[
    \lw(\pi_4(\Delta_4)) = 2 + 2 \sqrt{1 + \frac{2}{\sqrt{5}}}
    \quad \quad \text{and} \quad \quad
    \lw(\pi_5(\Delta_5)) = 5 + \frac{2}{\sqrt{3}}.
\]
Both of them are local maximizers of lattice width among lattice-free convex bodies in the respective dimensions.
\end{theorem}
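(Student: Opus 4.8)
The plan is to verify the explicit simplices $\Delta_4$ and $\Delta_5$ by following the same three-step template used for the family $\Delta_d$ in Section~\ref{secMain}, and then to invoke the local-maximality criterion from~\cite{averkov2021local}. First I would identify the corresponding normal vectors $a \in \R^{d+1}$ that give rise to $v$ via the circulant system~\eqref{eqhusdok}: since the construction in Section~\ref{secBackground} is a bijection between choices of $a$ (normalized to $a_1=0$, $a_{d+1}=1$) and vertex vectors $v$, one recovers $a$ by solving the linear system, and one checks that the entries of $a$ are non-decreasing. By the argument in the proof of Lemma~\ref{lemLatticeFree1}, non-decreasing $a$ already implies that the simplex $\Delta = \{x \in H : \langle \shift{a}{i},x\rangle \le a_{d+1}\}$ contains no integer point in its relative interior, hence $\pi_d(\Delta)$ is lattice-free exactly as in the proof of Theorem~\ref{thmmain}. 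That $\pi_d(\Delta)$ is a genuine $d$-dimensional simplex follows because the $d+1$ cyclic shifts $\shift{v}{0},\dots,\shift{v}{d}$ are affinely independent points of $H$; this is a finite determinant computation in each of the two cases.

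Next I would compute the lattice width. The key structural fact is that, just as in Lemma~\ref{lemLatticeWidth}, the minimizing direction is achieved at an integer vector $c$ that is, up to cyclic shift, of the ``single descent'' shape with $c_1 = \dots = c_j$ a maximal block and a drop at position $j$; one then evaluates $\langle c, v - \shift{v}{j}\rangle$ and, using $c_1 - c_{j+1} \ge 1$ and $c_j - c_{d+1} \ge 1$ together with the monotonicity of the relevant coordinate differences of $v$, obtains the bound $v_1 - v_{d+1}$. For the $\Delta_4$ and $\Delta_5$ data, $v_1 - v_{d+1}$ evaluates to $2 + 2\sqrt{1 + 2/\sqrt 5}$ and $5 + 2/\sqrt 3$ respectively, which gives the lower bounds on $\lw$. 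For the matching upper bounds I would exhibit an explicit integer vector $c \in \Z^{d+1}\setminus\{\lambda\one\}$ (equivalently $c \in \Z^d\setminus\{\zero\}$ after dropping the last coordinate) attaining equality; a natural candidate is $c = \unitvec_1$ or a small $\pm1/0$ vector, and one just checks $\max_{x\in\Delta}\langle c,x\rangle - \min_{x\in\Delta}\langle c,x\rangle = v_1 - v_{d+1}$ by evaluating $\langle c,\cdot\rangle$ at all $d+1$ vertices. This pins down $\lw$ exactly.

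Finally, for local maximality I would apply the characterization of local maximizers of lattice width among lattice-free convex bodies developed by Averkov, Codenotti, Macchia \& Santos~\cite{averkov2021local}: a lattice-free convex body $K$ is a local maximizer if (roughly) the set of width-minimizing lattice directions is rich enough to ``rigidify'' $K$, i.e. the linear/convex-geometric conditions in their criterion are satisfied. Concretely, I would list all integer directions $c$ achieving $\lw(\pi_d(\Delta_d))$ (by the circulant symmetry these come in orbits under cyclic shift, plus possibly sign/coordinate permutations fixing the simplex), identify for each the vertices of $\Delta_d$ at which the max and min are attained, and verify that the resulting system of ``width-active'' pairs leaves no direction of deformation that strictly increases every width while preserving lattice-freeness — this is exactly the combinatorial condition in~\cite{averkov2021local}. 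I expect this last step to be the main obstacle: one must enumerate the complete set of width-optimal lattice directions (not just one representative), which requires a genuine argument that no other integer vector does as well, and then feed the full incidence data into the local-maximality test. The first two steps are essentially the finite-dimensional specializations of Lemmas~\ref{lemLatticeFree1} and~\ref{lemLatticeWidth} and should go through with only routine algebra over the quadratic fields $\mathbb{Q}(\sqrt 5)$ and $\mathbb{Q}(\sqrt 3)$.
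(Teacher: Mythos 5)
Your proposal follows essentially the same route as the paper: lattice-freeness via the inequality description and Lemma~\ref{lemLatticeFree1}, the lattice width via the argument of Lemma~\ref{lemLatticeWidth} plus an explicit attaining direction, and local maximality by running the machinery of Averkov, Codenotti, Macchia \& Santos~\cite{averkov2021local}. The paper itself only sketches these steps and defers the (admittedly heavy) computations to a computer algebra system, so your outline — including your correct identification of the enumeration of all width-optimal directions as the genuinely hard part — matches its proof.
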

A precise version of the latter statement is the following:
For $d=4$ and $d=5$ there is some $\varepsilon > 0$ such that every lattice-free convex body $K \subseteq \R^d$ whose Hausdorff distance to $\pi_d(\Delta_d)$ is at most $\varepsilon$ satisfies $\lw(K) \le \lw(\pi_d(\Delta_d))$.

The fact that $\pi_4(\Delta_4)$ and $\pi_5(\Delta_5)$ are lattice-free can be easily confirmed by calculating their inequality descriptions and using Lemma~\ref{lemLatticeFree1}.
The lattice widths can be determined using a strategy similar to the proof of Lemma \ref{lemLatticeWidth}.
Showing that both simplices are local maximizers can be conducted by directly following all steps used by Averkov, Codenotti, Macchia \& Santos in~\cite{averkov2021local} for the three-dimensional case.
As in~\cite{averkov2021local}, the necessary computations are rather complex but can be verified using a computer algebra system in a straightforward way.

\section{Open questions}
\label{secQuestions}

We conclude our paper by posing some questions that naturally arise from our result, starting with the following.

\spnewtheorem{prob}{Problem}{\bfseries}{}
\begin{prob}
    Are there any $d$-dimensional lattice-free convex bodies with lattice width greater than $2d$?
\end{prob}

Actually, we do not even know a $d$-dimensional lattice-free convex body with lattice width equal to $2d$.
As indicated in Section~\ref{secBackground}, we cannot exceed this bound with our approach.

Following the previous section, all known (local) maximizers of the lattice width among lattice-free convex bodies for $d \le 5$ are obtained by maximizing $v_1 - v_{d+1}$ over all vectors $a \in \R^{d+1}$, where $v$ is the unique solution of~\eqref{eqhusdok}.
The question directly arises whether this approach can be used to obtain local maximizers in dimensions $d \ge 6$.
Unfortunately, this does not work:
While the maximizing vectors $a$ are non-decreasing for $d \le 5$, and hence result in lattice-free simplices, this is not true anymore for $d \ge 6$.

\begin{prob}
    Determine local maximizers of the lattice width among all lattice-free convex bodies in dimensions $d \ge 6$.
\end{prob}

Recall that the known constructions in dimensions $3,4,5$ are only known to be \emph{local} maximizers.
We still do not know whether any of these simplices is actually a global maximizer.

\begin{prob}
    Do there exist any $d$-dimensional lattice-free convex bodies whose lattice widths exceed the lattice widths of the known local maximizers for $d=3,4,5$?
\end{prob}

\bibliographystyle{splncs04}
\bibliography{references}

\end{document}